\newtheorem{theorem}{Theorem}[section]
\newtheorem{lemma}[theorem]{Lemma}
\newenvironment{example}[1][Example]{\begin{trivlist}
\item[\hskip \labelsep {\bfseries #1}]}{\end{trivlist}}
\newenvironment{proof}[1][\textit{Proof.}]{\begin{trivlist}
\item[\hskip \labelsep {\bfseries #1}]}{\end{trivlist}}
\newcommand{\Cl}{\operatorname{Cl}}
\journal{Journal of Geometry and Physics}
\begin{document}
\begin{frontmatter}

\title{Getzler Symbol Calculus and Deformation Quantization}
\author[cm]{Camilo Mesa\fnref{label2}}
\ead{mesa@colorado.edu}
\fntext[label2]{Department of Mathematics, University of Colorado at Boulder}
\address[cm]{Department of Mathematics, University of Colorado, 
Campus Box 395, \\ 
Boulder, Colorado 80309-0395\\USA}


\begin{abstract}
In this paper we give a construction of Fedosov quantization incorporating the odd variables and an analogous formula to Getzler's pseudodifferential calculus composition formula is obtained. A Fedosov type connection is constructed on the bundle of Weyl tensor Clifford algebras over the cotangent bundle of a Riemannian manifold. The quantum algebra associated with this connection is used to define a deformation of the exterior algebra of Riemannian manifolds.
\end{abstract}

\begin{keyword}
Deformation quantization  \sep
Atiyah-Singer index theorem
\MSC[2010] 53D55  \sep 58B34 \sep 58J20

\end{keyword}

\end{frontmatter}

\section{Introduction}
Getzler in \cite{getzler1983pseudodifferential} introduced a pseudodifferential calculus of symbols on supermanifolds which simplifies the calculation of the top order symbol of the square of the Dirac operator in the proof of the Atiyah-Singer theorem on a spin manifold. For this construction, supersymmetric ideas from quantum field theory lead to the use of Clifford variables and a suitable filtration for the space of pseudodifferential symbols. These ideas combined with  more classical work of Widom \cite{widom1978families} yielded a composition formula of symbols 
\begin{equation}
\label{getzlerproduct}
 p \circ q(x,\xi)= \left. \exp \left( -\frac{1}{4}R \left( \frac{\partial}{\partial \xi},\frac{\partial}{\partial \eta}\right) \right)p(x,\xi) \wedge q(y,\eta) \right|_{y=x,\eta=\xi} + \cdots.
\end{equation}
This remarkable formula not only provided a short proof of the Atiyah-Singer index theorem, but is of great interest from the point of view of deformation theory given its distinctive geometric features such as the appearance of the curvarture operator $R \left( \partial / \partial \xi, \partial / \partial \eta \right)$. 

On the other hand, deformation quantization emerged with the goal of understanding as much as possible of quantum mechanics in terms of deformed algebra structures. The seminal paper \cite{bayen1978deformation} considers the question of defining local associative deformations (star products)  
\begin{equation}
\label{starproduct}
p \star q = pq + (\hbar/2) \lbrace p, q \rbrace + \hbar^2 B_2(p,q)+ \cdots
\end{equation}
of the algebra of smooth complex-valued functions on a symplectic manifold $(M,\omega)$, where the first order term corresponds to the Poisson bracket associated to $\omega$. Fedosov in \cite{fedosov1994simple} provided a canonical construction of star products associated with any symplectic connection on a symplectic manifold. Moreover a classification of these deformations was established via a one-to-one correspondence with formal power series $\Omega=\Omega_0 + \hbar \Omega_1 +\cdots$ of cohomology classes $\Omega_i \in H^2(M)$, where $\Omega_0$ is the class of the symplectic form. The product given by composition of symbols in the usual pseudodifferential calculus on the cotangent bundle of a Riemannian manifold is an important example of a star product. From this observation, Fedosov \cite{fedosovindex} and Nest and Tsygan \cite{NestTsyganAlgIndex95} proposed analogues of the Atiyah-Singer index theorem for the deformation quantization algebra of a given symplectic manifold.

Given the common motivation for formulas of the type (\ref{getzlerproduct}) and (\ref{starproduct}), the goal of this paper is to construct a general local theory of deformations by Getzler. Having as a proper framework a polarized symplectic manifold, we pursue this goal by means of a Fedosov type description in the particular case of the cotangent bundle of a given Riemannian manifold.  The main idea is to incorporate the Clifford algebra in Fedosov's construction to obtain a a deformation of  the algebra of formal power series in $\hbar$ with differential forms on a Riemannian manifold $X$ as coefficients.

The content and organization of the following sections is as follows. In section 2 we present the basic algebraic machinery of the construction by defining the Weyl and Clifford algebra bundles. We prove the existence of an Abelian connection on the Weyl tensor Clifford algebra bundle in section 3. Sections of the Weyl tensor Clifford algebra bundle which are flat with respect to this connection form a subalgebra. In section 4, Theorem \ref{secondtheorem} stablishes a one-to-one correspondence between this subalgebra and sections of $\Lambda(X)[[\hbar]]$. Finally, using this correspondence we define a deformation (\ref{deformation}) which is analogous to Getzler composition formula.

Finally I would like to express my sincerest gratitude to my advisor, Professor Alexander Gorokhovsky, who proposed the topic and supported me throughout this research with patience, and Professor Arlan Ramsay for many useful discussions and corrections.
\section{The Weyl and Clifford algebra bundles}

We outline the constructions of the Weyl and Clifford algebras. For proofs we refer the reader to \cite{fedosovindex} and \cite{chevalley97algebraic}. The following summary develops in the context of a general symplectic structure (resp. Riemannian structure) for the Weyl algebra (resp. Clifford algebra). For our construction however, the base space for the Weyl algebra bundle will be the cotangent bundle of a Riemannian manifold with standard symplectic structure.

Let $\hbar$ be a formal parameter. The Weyl algebra is the $\mathbb{C}[[\hbar]]$-algebra
\begin{equation}
W= \mathbb{C}[[\hat x^1, \ldots, \hat x^n, \hat \xi^1, \ldots, \hat \xi^n, \hbar]]
\end{equation}
of formal power series in $\hat x^i, \hat \xi^i$, and $\hbar$ with associative product given by the Moyal rule
\begin{equation}
f \circ g=  \left.  \exp \left( \frac{i\hbar}{2} \sum_{i=1}^n \left( \frac{\partial}{\partial \hat x^i} \frac{\partial}{\partial \hat \eta ^i} -  \frac{\partial}{\partial \hat \xi^i} \frac{\partial}{\partial \hat y^i}\right)\right) f(\hat{x},\hat{\xi},\hbar) \, g(\hat{y},\hat{\eta},\hbar) \right|_{\hat y = \hat x, \, \hat \eta = \hat \xi }.
\end{equation}
Note that $W$ is generated by $\hat{x}^i, \hat{\xi}^i$ and $\hbar$ with relations $\hat{x}^i \circ \hat{\xi}^j - \hat{\xi}^j \circ \hat{x}^i = i\hbar \delta_{ij}$. Setting the degrees of the generators as: $\vert \hat{x}^i \vert, \vert \hat{\xi}^i \vert=1$, $\vert \hbar \vert=2$, we obtain $\mathbb{Z}$-filtration of $W$. 

The symplectic group $\mathrm{Sp}_{2n}$ of linear transformations preserving the standard symplectic form $\omega=  \sum dx^i \wedge d\xi^i$ on $\mathbb{R}^{2n}$ acts on $W$ by automorphisms. To describe the corresponding action of the Lie algebra $ \mathfrak{sp}_{2n}$ on $W$ it is convenient to use a reordering $y^i$, $i=1,\ldots,2n$, of the usual coordinates $(x^i,\xi^i)\in \mathbb{R}^{2n}$ obtained by setting $y^{2i-1}=x^i, y^{2i}=\xi^i$ for $i=1,\ldots,n$. In these coordinates the symplectic form $\omega$ is written as
\begin{equation}
\omega = \frac{1}{2} \sum \omega_{ij} dy^i \wedge dy^j.
\end{equation}
Recall that $\mathfrak{sp}_{2n}$ consists of matrices $S=s_j^i$ such that $\sigma_{ij} = \omega_{ik} s^k_j$ is symmetric. Each $S \in \mathfrak{sp}_{2n}$, induces an inner derivation defined as $S(f)=[\tilde{S},f]$, for $f\in W$, where 
\begin{equation} \label{eq: derW}
\tilde{S}=\frac{1}{2\hbar} \sum_{i,j=1}^{2n} \sigma_{ij} \hat{y}^i \hat{y}^j.
\end{equation}
Thus we think of $\mathfrak{sp}_{2n} \hookrightarrow W$ as the Lie algebra of homogeneous quadratic polynomials in $W$. Moreover the kernel of the map $f \mapsto (1/\hbar)[f, \cdot]$ (Moyal product bracket) from $W$ onto the Lie algebra of derivations of $W$ is $\mathbb{C} [[\hbar,\hbar^{-1} ]]$. Therefore
\begin{equation}
\mathfrak{g}=\frac{1}{\hbar}W/ \mathbb{C} [[\hbar,\hbar^{-1} ]]
\end{equation}
is isomorphic to the Lie algebra of derivations of $W$.

Now we define the bundle of Weyl algebras on a symplectic manifold $(M, \omega)$ of dimension $2n$. Symplectic frames form a principal bundle $\mathrm{Sp}(M) \rightarrow M$ with structure group $\text{Sp}_{2n}$.  The Weyl algebra bundle over $M$ is an associated bundle to the symplectic frame bundle,
\begin{equation}
\label{WeylBundle}
\mathcal{W}= \mathrm{Sp}(M) \times_{\mathrm{Sp}_{2n}} W.
\end{equation}
If we trivialize $F_{\text{Sp}}(M)$ over $M$, $F_{\text{Sp}}(M) \rightarrow M \times \text{Sp}_{2n}$,
we see that locally, $\mathcal{W}$ is identified with $M \times W$. Locally we write sections of $\mathcal{W}$ as
\begin{equation}
\sum_{|\alpha|, k \geq 0} \hbar^k a_{k,\alpha}\hat{y}^\alpha
\end{equation}
where $\alpha \in \mathbb{N}^{2n}_{0}$ is a multi-index, $a_{k,\alpha}$ are smooth functions on $M$, $\hat{y}^\alpha=(\hat{y}^1)^{\alpha_1} \cdots(\hat{y}^{2n})^{\alpha_{2n}}$, and $\hat{y}^{2i-1}=\hat{x}^i$, $\hat{y}^{2i}=\hat{\xi}^i$, $1\leq i \leq n$. The filtration of $W$ induces a filtration on $\mathcal{W}$
\begin{equation}
\mathcal{W} \supset \cdots \supset \mathcal{W}^{-1} \supset \mathcal{W}^0 \supset \mathcal{W}^1 \supset \mathcal{W}^2 \supset \cdots,
\end{equation}
where $\mathcal{W}^j = \lbrace \sum_{2k+ |\alpha| \geq j} \hbar^k a_{k,\alpha}\hat{y}^\alpha  \rbrace$.

Now we define the Clifford algebra bundle over a Riemannian manifold $X$. The facts in the following summary can be found in \cite{atiyah1973heat} and \cite{chevalley97algebraic} with the exception that the deformation parameter $\hbar$ is introduced as part of the definition of the Clifford algebra. If $V$ is a real or complex vector space of dimension $n$ with inner product $\langle \cdot, \cdot \rangle$, then the Clifford algebra, denoted by $\mathrm{Cl}_{\hbar}(V)$, is the algebra generated by $V$ with relations 
\begin{equation}
\label{cliffordrelation}
vu+uv=-2\hbar \langle v, u\rangle.
\end{equation}
One can also think of $\mathrm{Cl}_{\hbar}(V)$ as the quotient of the tensor algebra $\mathcal{T}(V)=\bigoplus_{j \geq 0} V \otimes \cdots \otimes V$ by the two-sided ideal $\mathcal{I}$ generated by all elements of the form $v \otimes v + \hbar\Vert v \Vert ^2$ for $v \in V$. There is an increasing filtration $\mathrm{Cl}_{\hbar}(V)= \cup_{k} \mathrm{Cl}_{\hbar}^{k}(V)$ induced by the order gradation from the tensor algebra. An element $v \in \mathrm{Cl}_{\hbar}(V)$ is of order $k$ if there is $u \in \bigoplus_{j \geq 1}^k V \otimes \cdots \otimes V$ such that $v=[u]$. We define the order of $\hbar \in \Cl_{\hbar}(V)$ to be 2. If we reduce modulo 2 the $\mathbb{Z}$-grading of $\mathcal{T}(V)$ we see that $\mathcal{T}(V)$ is a superalgebra. Since the ideal $\mathcal{I}$ is generated by even elements, the Clifford algebra is itself a superalgebra $\mathrm{Cl}_{\hbar}(V)=\mathrm{Cl}_{\hbar}^{+}(V) \oplus \mathrm{Cl}_{\hbar}^{-}(V)$.

The space $\frac{1}{\hbar}\Cl_{\hbar}^2(V)$ forms a Lie algebra with Clifford product bracket and its adjoint action on $V$ yields the following result. 

\begin{lemma} The adjoint action $\tau$ of the Lie algebra
\[
\mathfrak{spin}_{\hbar}(V):=\frac{1}{\hbar}\Cl_{\hbar}^2(V)
\]
on $V$ induces a Lie algebra isomorphism between $\mathfrak{spin}_{\hbar}(V)$ and $\mathfrak{so}(V)$.
\end{lemma}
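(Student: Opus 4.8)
The plan is to make $\tau$ completely explicit in an orthonormal basis and then verify, one at a time, the three things needed for a Lie algebra isomorphism onto $\mathfrak{so}(V)$: that $\tau$ is a well-defined Lie algebra homomorphism into $\mathfrak{gl}(V)$, that its image consists of skew endomorphisms, and that it is bijective. So I would first fix an orthonormal basis $e_1,\dots,e_n$ of $V$; relation (\ref{cliffordrelation}) then reads $e_ie_j=-e_je_i$ for $i\neq j$ and $e_i^2=-\hbar$, and on such a basis the Clifford relations let one write any element of $\mathfrak{spin}_{\hbar}(V)$ in terms of the bivectors $\tfrac{1}{\hbar}e_ie_j$ with $i<j$ and central scalars (the latter acting trivially below). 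A short computation from the Clifford relations gives, for $i<j$ and any $k$,
\[
\Bigl[\tfrac{1}{\hbar}e_ie_j,\ e_k\Bigr]=2\bigl(\delta_{ik}e_j-\delta_{jk}e_i\bigr),
\]
which shows at once that $\tau(a)$ maps $V$ into $V$, so $\tau$ is a well-defined linear map into $\mathfrak{gl}(V)$; the companion computation of $[\tfrac{1}{\hbar}e_ie_j,\tfrac{1}{\hbar}e_ke_l]$ shows in the same breath that $\tfrac{1}{\hbar}\Cl_{\hbar}^{2}(V)$ is closed under the Clifford bracket — the $1/\hbar$ in the definition is exactly what keeps the bracket inside $\tfrac{1}{\hbar}\Cl_{\hbar}^{2}(V)$.

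Next I would observe that, since $V$ generates $\Cl_{\hbar}(V)$ and is invariant under the adjoint action of $\mathfrak{spin}_{\hbar}(V)$, the map $\tau$ is just the adjoint representation restricted to the invariant subspace $V$; hence $\tau([a,b])=[\tau(a),\tau(b)]$ is immediate from the Jacobi identity. For the image, I would use that $\mathrm{ad}_a$ is a derivation of the associative product: for $u,v\in V$,
\[
[a,v]\,u+u\,[a,v]+v\,[a,u]+[a,u]\,v=[a,\,vu+uv]=[a,\,-2\hbar\langle v,u\rangle]=0 ,
\]
and rewriting the left-hand side with (\ref{cliffordrelation}) (legitimate because $[a,v],[a,u]\in V$) this is precisely $\langle\tau(a)v,u\rangle+\langle v,\tau(a)u\rangle=0$, i.e.\ $\tau(a)\in\mathfrak{so}(V)$.

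For bijectivity, injectivity is the point: if $\tau(a)=0$ then $a$ commutes with $V$, hence with all of $\Cl_{\hbar}(V)$, so $a$ is central; but a nonzero combination $\sum_{i<j}c_{ij}\tfrac1\hbar e_ie_j$ is never central, since the displayed bracket formula forces every $c_{ij}$ to vanish once $\tau(a)=0$ (alternatively, the center of $\Cl_{\hbar}(V)$ is $\mathbb{C}[[\hbar]]$ for $n$ even and $\mathbb{C}[[\hbar]]\oplus\mathbb{C}[[\hbar]]\,e_1\cdots e_n$ for $n$ odd, neither of which meets the span of the $\tfrac1\hbar e_ie_j$ nontrivially). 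Since $\dim\mathfrak{spin}_{\hbar}(V)=\binom{n}{2}=\dim\mathfrak{so}(V)$, an injective linear map is automatically onto; equivalently, the $\tau(\tfrac1\hbar e_ie_j)$ are twice the standard basis $E_{ji}-E_{ij}$ of $\mathfrak{so}(V)$ in the chosen basis, so surjectivity is direct. Together with the two previous paragraphs this gives the claimed isomorphism.

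I expect the only genuinely delicate point to be the bookkeeping in the first paragraph — checking that $\tfrac1\hbar\Cl_{\hbar}^{2}(V)$ is a Lie subalgebra and that its adjoint action lands inside $\mathfrak{gl}(V)$ rather than in a larger piece of the filtration. This hinges entirely on the interaction between the Clifford relations and the stipulation that $\hbar$ has order $2$; once the two bracket computations are recorded, the remainder is the classical identification of $\mathfrak{spin}$ with $\mathfrak{so}$ transcribed with the parameter $\hbar$ in place.
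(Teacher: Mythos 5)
Your proof is correct and takes essentially the same route as the paper: the same orthonormal-basis computation of $[\tfrac{1}{\hbar}e_ie_j,e_k]$ to see that $\tau$ lands in $\mathfrak{gl}(V)$, skew-symmetry via the derivation property of $\mathrm{ad}_a$ (equivalent to the paper's Jacobi-identity argument), and injectivity combined with the dimension count $n(n-1)/2$. The only substantive difference is that you actually justify injectivity (via centrality, or the explicit formula $\tau(\tfrac{1}{\hbar}e_ie_j)=2(E_{ji}-E_{ij})$), a point the paper merely asserts.
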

\begin{proof}
Let $e_1,\ldots,e_n$ be an orthonormal basis for the vector space $V$. We start by showing that $[a,v]\in V$ for all $a \in \mathfrak{spin}_{\hbar}(V)$ and $v \in \Cl_{\hbar}^1(V)=V$. It is enough to check the following two cases. First assume $a=\frac{1}{\hbar} e_i e_j$ and $v=e_k$ with $i,j,k$ all different. Then
\begin{eqnarray*}
[a,v] & = & \frac{1}{\hbar} ( e _i e_j e_k - e_k e_i e_j )\\
& = &  \frac{1}{\hbar} ( e _i e_j e_k - e_i e_j e_k )\\
& = & 0.
\end{eqnarray*}
The second case is $a=\frac{1}{\hbar} e_i e_j$ and $v=e_i$. In this case we have
\begin{eqnarray*}
[a,v] & = & \frac{1}{\hbar} (e _i e_j e_i - e_i e_i e_j) \\
& = & \frac{1}{\hbar}( -e _i e_i e_j - e_i e_i e_j ) \\
& = & 2e_j.
\end{eqnarray*}
Also note that $\tau [a,b]= [\tau (a), \tau (b)]$ so it follows that $\tau$ defines a Lie algebra homomorphism from $\mathfrak{spin}_{\hbar}(V)$ into $\mathfrak{gl}(V)$. Moreover, $\tau$ actually maps into $\mathfrak{so}(V)$ since by the Jacobi identity
\begin{eqnarray*}
\langle \tau(a)v, u \rangle + \langle v, \tau(a) u \rangle & = & -\frac{1}{2\hbar}[[a,v],u]- \frac{1}{2\hbar}[v,[a,u]] \\
& = & -\frac{1}{2\hbar} [a,[v,u]] \\
& = & 0.
\end{eqnarray*}
Since $\tau$ is injective, it is an isomorphism since $\mathfrak{so}(V)$ and $\mathfrak{spin}_{\hbar}(V)$ are both of dimension $n(n-1)/2$.
\qed
\end{proof}
Working with an oriented orthonormal basis $e_1, \ldots, e_{n}$ of  $V$  one shows that given $A \in \mathfrak{so}(V)$ its corresponding element under $\tau$ is
\begin{equation} 
\label{eq:tauinverseh}
\tau^{-1}(A)=\frac{1}{2\hbar}\sum_{i<j} \langle Ae_i,e_j \rangle e_i e_j \in \mathfrak{spin}_{\hbar}(V).
\end{equation}
The exterior algebra $\Lambda V$ is a module over the Clifford algebra $\Cl_{\hbar}(V)$. Given $v\in V$ its action $c_{\hbar}(v)$ on $\Lambda V$ is 
\begin{equation}
c_{\hbar}(v)\alpha = \hbar(\epsilon(v)\alpha - \iota(v) \alpha),
\end{equation} 
where $\epsilon$ and $\iota$ denote the exterior and interior derivations in $\Lambda V$. This action respects the relation in the definition of the Clifford algebra since for $v,u \in V$ and $\alpha \in \Lambda V$ we have
\begin{eqnarray}
(c_{\hbar}(vu)+c_{\hbar}(uv))\alpha & = & \hbar \left( \epsilon(v)\epsilon(u) - \iota(v)\epsilon(u) -\epsilon(v)\iota(u) + \iota(v)\iota(u)  \right. \\
& + & \left. \epsilon(u)\epsilon(v) - \iota(u)\epsilon(v) -\epsilon(u)\iota(v) + \iota(u)\iota(v) \right)\alpha \\
& = & -\hbar(\iota(v)\epsilon(u) + \epsilon(v)\iota(u) + \iota(u)\epsilon(v) +\epsilon(u)\iota(v) )\alpha \\
& = & -2\hbar  \langle v, u \rangle \alpha.
\end{eqnarray}
Here we used the relation $\epsilon(v)\iota(u) + \iota(u) \epsilon(v) = \langle v, u \rangle$ on $\Lambda V$. The symbol map
\begin{equation}
\label{cliffordsymbol}
\sigma_{\hbar}: \Cl_{\hbar}(V) \rightarrow \Lambda V
\end{equation}
given by $\sigma_{\hbar}(v) = c_{\hbar}(v) \, 1$ is an isomorphism of vector spaces, where 1 denotes the identity in $\Lambda V$. Note that if $e_1,\ldots,e_n$ is an orthonormal basis of $V$ then 
$\sigma_{\hbar} (e_{i_1} e_{i_2} \cdots e_{i_k})= \hbar^k e_{i_1}\wedge e_{i_2} \wedge \cdots \wedge e_{i_k}$.
On the other hand, the quantization map 
\begin{equation}
\theta_{\hbar}: \Lambda V \rightarrow \Cl_{\hbar}(V)
\end{equation}
is rescaled as follows $\theta_{\hbar} (e_{i_1}\wedge e_{i_2} \wedge \cdots \wedge e_{i_k}) = \hbar^{-k}e_{i_1} e_{i_2} \cdots e_{i_k}$.

If $X$ is a Riemannian manifold of dimension $n$, the Clifford bundle $\Cl_{\hbar}$ over $X$ is an associated bundle to the orthonormal frame bundle $\mathrm{O}(X)$,
\begin{equation}
\label{CliffordBundle}
\Cl_{\hbar}= \mathrm{O}(X) \times_{\mathrm{O}(n)} \Cl_{\hbar}(\mathbb{R}^n).
\end{equation}
Hence $\Cl_{\hbar}$ is the bundle of algebras over $X$ whose fiber at $x\in X$ is the Clifford algebra $\Cl_{\hbar}(T^*_xX)$ of the inner product space $T_x^*X$.

\section{An Abelian connection on the Weyl algebra with fermionic variables}

In order to relate the geometry of the given Riemannian manifold $X$, fermionic variables corresponding to the exterior algebra $\Lambda T^*_xX$ at each $x$ are combined with the Weyl algebra. We prove that the Weyl tensor Clifford algebra bundle over the cotangent bundle of $X$ has a canonical connection which can be flattened using Fedosov recursive method. 

Let $X$ be a Riemannian manifold of dimension $n$. Consider the cotangent bundle $\pi:M = T^{*}X \rightarrow X$ with canonical symplectic form $\omega$. Let $\mathcal{W}$ be the Weyl algebra bundle on $M$ and use the trivial $\mathbb{Z}_2$-grading, $\mathcal{W}^{+}=\mathcal{W}$ and $\mathcal{W}^{-}=0$. Pull back the bundles $\Lambda T^*X$ and $\Cl_{\hbar}$ to $M$ via $\pi$ and denote them as $ \Lambda= \pi^* (\Lambda T^{*}X)$, $\mathcal{C}=\pi^* (\Cl_{\hbar})$. Now consider the bundle 
\begin{equation}
\mathcal{W} \otimes_{C^\infty(M)} \mathcal{C} \rightarrow M.
\end{equation}
Note that $\mathcal{W \otimes C}$ forms an associative algebra bundle with respect to the Moyal-Clifford product and has a $\mathbb{Z}$-filtration inherited from $\mathcal{W}$ and $\mathcal{C}$:
\begin{equation}
\left( \mathcal{W \otimes C} \right)_{p} = \bigoplus_{i+j=p}\mathcal{W}^i \otimes \mathcal{C}^j.
\end{equation}
A natural parity is induced on this bundle, $ (\mathcal{W}\otimes \mathcal{C})^+ = \mathcal{W} \otimes \mathcal{C}^+$ and  $(\mathcal{W}\otimes \mathcal{C})^{-} = \mathcal{W} \otimes \mathcal{C}^{-}$. Differential forms $\mathcal{A}^\bullet (M,\mathcal{W} \otimes \mathcal{C})$ with values in $\mathcal{W} \otimes \mathcal{C}$ also form a superalgebra with the usual parity convention. Henceforth, the bracket $[\cdot,\cdot]$ on this algebra is to be understood as a supercommutator.

Sections of this bundle are locally of the form $ a(\hbar,\hat{y},e)=\sum_{|\alpha|,|\beta|,k \geq 0} \hbar^k a_{k,\alpha,\beta}\hat{y}^\alpha e_\beta$, where $\alpha \in \mathbb{N}^{2n}_{0}$ and $\beta \in \mathbb{N}^{n}_{0}$ are multi-indices. Note that the Clifford symbol map lifts to a vector bundle isomorphism $\sigma_{\hbar}: \Gamma(M,\mathcal{C}) \rightarrow \Gamma(M,\Lambda) $. Also, since $\mathcal{W}$ is unitary, we have an embedding $ \theta_\hbar:\Gamma(M,\Lambda) \hookrightarrow \Gamma(M,\mathcal{W}\otimes \mathcal{C})$.

From (\ref{CliffordBundle}), we see that $\Cl_{\hbar}$ inherits a connection, the Levi-Civita connection $\nabla^{\Cl}$ which respects the Clifford product, $\nabla^{\Cl}(ab)=(\nabla^{\Cl} a)b+a (\nabla^{\Cl} b)$ for $a,b\in \Gamma(X,\Cl_{\hbar})$. We pull back this connection to $\mathcal{C}$ via $\pi$, and use (\ref{eq:tauinverseh}) to write it locally in an orthonormal frame as
\begin{equation}
\nabla^{\mathcal{C}}=d+\frac{1}{4\hbar}[\Gamma_{ij}^{k} e_j e_k dx^i, \cdot ]
\end{equation}
where  $\Gamma_{ij}^{k}$ are the Christoffel symbols of the Levi-Civita connection and $e_i$ correspond to a local orthonormal coframe on $X$.

On the tangent bundle of the symplectic manifold $M=T^*X$, consider a symplectic connection $\nabla$. This connection is not unique or canonical but it has some desired properties such as its compatibility with the Moyal product. Note that from (\ref{WeylBundle}), this connection will induce a connection on $\mathcal{W}$ such that $\nabla(a  \circ b)=(\nabla a)\circ b+a \circ (\nabla b)$ for $a,b\in \Gamma(M,\mathcal{W})$. For our construction however, there is a specific symplectic connection on the tangent bundle of $M$ obtained from the original Riemannian manifold $X$ data. There is a one-to-one correspondence between connections on $TM$ that are torsion free and connections preserving the symplectic form, see \cite{gelfand1998fedosov}. The Levi-Civita connection on $X$ induces a torsion free connection on $TM$, and its associated symplectic connection on $TM$ via this correspondence will be denoted by $\nabla$. In the corresponding Darboux coordinates $(x^i,\xi^i)$ on $M$, one uses (\ref{eq: derW}) to locally write the connection on $\mathcal{W}$ induced by $\nabla$ as
\begin{equation}
\nabla = d + \frac{i}{2\hbar}[\tilde{\Gamma}_{jki}\hat{y}^j\hat{y}^k d(x,\xi)^i, \cdot]
\end{equation}
where $\tilde{\Gamma}_{jki} = \omega_{jl} \tilde{\Gamma}^l_{ki}$ are the Christoffel symbols of the symplectic connection $\nabla$ and $d(x,\xi)^i=dx^1+d\xi^1+\cdots + dx^n +d\xi^n$.

The Koszul complex of $\mathcal{W}$ is acyclic, see \cite{fedosov1994simple} and its differential $\delta$ can be extended as  $\delta: \mathcal{A}^{\bullet}(M,(\mathcal{W} \otimes \mathcal{C})_{\bullet}) \rightarrow \mathcal{A}^{\bullet +1}(M,(\mathcal{W} \otimes \mathcal{C})_{\bullet -1})$ since $\mathcal{C}$ is unitary. This operator will be written locally as
\begin{equation}
\label{delta}
\delta = \frac{1}{\hbar}[\omega_{ij}\hat{y}^i d(x,\xi)^j, \cdot] = d(x,\xi)^i \frac{\partial}{\partial \hat{y}^i}.
\end{equation}

Consider also the operator $\delta^{-1}: \mathcal{A}^{\bullet}(M,(\mathcal{W} \otimes \mathcal{C})_{\bullet}) \rightarrow \mathcal{A}^{\bullet -1}(M,(\mathcal{W} \otimes \mathcal{C})_{\bullet +1})$ defined as
\begin{equation}
\delta^{-1} = \hat{y}^i \iota  \left( \frac{\partial}{\partial (x,\xi)^i} \right)
\end{equation}
where  $\frac{\partial}{\partial (x,\xi)^i} = \frac{\partial}{\partial x^1} +\frac{\partial}{\partial \xi^1}+ \cdots + \frac{\partial}{\partial x^n} +\frac{\partial}{\partial \xi^n}$.  A direct check shows that the homotopy relation 
\[
\delta \delta^{-1} + \delta^{-1} \delta = (p+r) \mathrm{Id}
\]
holds on monomials of the form $\hat{y}^{\alpha}e_{\beta}dx^{\gamma}\in \mathcal{A}^r(M,\mathcal{W}_p \otimes \mathcal{C})$ where $\alpha, \beta, \gamma$ are multi-indices. It follows that the complex  $(\mathcal{A}^{\bullet}(M,(\mathcal{W} \otimes \mathcal{C})_{\bullet}),\delta)$ is also acyclic.

Now we show how to construct a $(\mathfrak{g}\otimes \mathcal{C})$-valued connection $D$ on the bundle $\mathcal{W \otimes C} \rightarrow M$ which has central curvature. Such connection 
\begin{equation*}
D: \Gamma(M, \mathcal{W \otimes C}) \rightarrow \mathcal{A}^{1} (M,\mathcal{W \otimes C})
\end{equation*}
is of the form
\begin{equation*}
D= \nabla \otimes 1 + 1 \otimes \nabla^{\mathcal{C}} + [r,\cdot]
\end{equation*}
where $r=r_{-1}+r_0+r_1+r_2+\cdots \in \mathcal{A}^1(M,\mathfrak{g}\otimes\mathcal{C})$, $r_{-1}=\frac{1}{\hbar}\omega_{ij}\hat{y}^i d(x,\xi)^j $ induces the derivation described in (\ref{delta}), $r_0=0$, and $r_k$ for $k\geq 1$ are constructed so that $D^2 \in \mathcal{A}(M)[[\hbar]]$ is central. We show how to find $r$ so that $D^2a=0$ for any section $a \in \Gamma(M, (\mathcal{W}\otimes \mathcal{C})_p)$ and any integer $p$. Read the equation
\begin{equation}\label{eq:D2}
D^2  = \nabla^{2}  - [\delta,r] + [\nabla, r] + {\nabla^{\mathcal{C}}}^2  + \left[\nabla^{\mathcal{C}},r \right] + r^2=0
\end{equation}
in each degree and use the acyclicity of the complex $(\mathcal{A}^{\bullet}(M,(\mathcal{W} \otimes \mathcal{C})_{\bullet}),\delta)$ to solve recursively for the operators $r_k$ as follows.
\begin{itemize}
\item \textit{Degree} $p-2$. $\delta^2=0$.
\item \textit{Degree} $p-1$. $[\delta,\nabla]+[\delta,\nabla^{\mathcal{C}}]=0$.  The first term is known to be equal to zero from \cite{fedosov1994simple}. The second term is equal to zero as well since Weyl and Clifford variables commute.
\item \textit{Degree} $p$. $-[\delta,r_1] + \nabla^2 + {\nabla^\mathcal{C}}^2 = 0$. Since $[\delta, \nabla^2]=[\delta, \nabla]\nabla + \nabla[\delta, \nabla]$, 
and each of these terms are equal to zero, we have $[\delta,\nabla^2]=0$ . Also $[\delta, {\nabla^{\mathcal{C}}}^2]=0$ since ${\nabla^{\mathcal{C}}}^2$ contains no Weyl variables. Therefore $[\delta,\nabla^2 + {\nabla^\mathcal{C}}^2] = 0$ as needed. A local expression for $r_1$ can be computed using the homotopy $\delta^{-1}$,
\begin{align*}
r_1 & =  \delta^{-1} (\nabla^2 + {\nabla^\mathcal{C}}^2 ) \\
& = \frac{i}{16\hbar} \tilde{R}_{jkil} \left( \hat{y}^i \hat{y}^j \hat{y}^k d(x,\xi)^l - \hat{y}^l \hat{y}^j \hat{y}^k d(x,\xi)^i \right) \\ 
& + \frac{1}{60 \hbar} \left( R^{k} _{jil} \hat{y}^{2i-1} e_j e_k dx^l - R^{k} _{jil} \hat{y}^{2l-1} e_j e_k dx^i \right) \\
& =   \frac{i}{8\hbar} \tilde{R}_{jkil}  \hat{y}^j \hat{y}^k \hat{y}^i d(x,\xi)^l + \frac{1}{30 \hbar}  R^{k} _{jil} \hat{y}^{2i-1} e_j e_k dx^l.
\end{align*}
Here we used symmetry relations of the curvature tensor of the symplectic and the Levi-Civita connections.

\item \textit{Degree} $p+1$. $-[\delta,r_2] + [\nabla,r_1] + [\nabla^{\mathcal{C}},r_1]=0$. Using the Jacobi identity and substituing recursively, we see that $[ \delta, [\nabla +\nabla^{\mathcal{C}}, r_1]]  =  [[\delta, \nabla +\nabla^{\mathcal{C}}], r_1] + (-1)^{|\delta||\nabla + \nabla^{\mathcal{C}}|}[\nabla + \nabla^{\mathcal{C}}, [\delta,r_1]]=0$. Hence $r_2$ exists and a local expression for it is obtained from $r_2= \delta^{-1}([\nabla + \nabla^{\mathcal{C}},r_1])$.
\end{itemize}

In general, the procedure can be carried out in each degree if the non-homogeneous local section
\[
\rho = \nabla^2 +{\nabla^{\mathcal{C}}}^2 + [\nabla,r] + [\nabla^{\mathcal{C}},r]+r^2
\]
is in the kernel of $\delta$. Using the Bianchi identity and substituing recursively this can be verified as follows
\begin{eqnarray*}
[\delta,\rho] & = &[\delta, \nabla^2 +{\nabla^{\mathcal{C}}}^2] +[\delta, [\nabla +\nabla^{\mathcal{C}},r]]+[\delta, r^2] \\
& = & 0 + [[\delta,\nabla +\nabla^{\mathcal{C}}],r]+ (-1)^{|\nabla +\nabla^{\mathcal{C}}||\delta|}[\nabla +\nabla^{\mathcal{C}},[\delta,r]]+[[\delta ,r], r] \\
& = & -[\nabla +\nabla^{\mathcal{C}},[\delta,r]]-[r, [\delta, r]]\\ 
& = & -[\nabla +\nabla^{\mathcal{C}},\rho] - [r, \rho]\\
& = & -[\nabla^2 +{\nabla^{\mathcal{C}}}^2 ,r]- [\nabla + \nabla^{\mathcal{C}},r^2]-[r,\nabla^2 +{\nabla^{\mathcal{C}}}^2] - [r, [\nabla +\nabla^{\mathcal{C}},r]] - [r, r^2] \\
& = & - [\nabla^2 +{\nabla^{\mathcal{C}}}^2,r]-[[\nabla +\nabla^{\mathcal{C}},r],r]+[\nabla^2 +{\nabla^{\mathcal{C}}}^2,r]+[[\nabla +\nabla^{\mathcal{C}},r],r] \\
& = & 0.
\end{eqnarray*} 

\begin{theorem}
\label{firsttheorem}
Given
\[
\Omega_{\hbar} = \omega_0 + \hbar\omega_1 + \hbar^2\omega_2 + \cdots \in \mathcal{A}^2(M,\mathbb{C})[[\hbar]]
\]
such that $d\omega_i=0$, where $\omega_0=\omega$ is the canonical symplectic form on $M$, there exists a $1$-form $r= r_{-1} + r_0 + r_1+ \cdots$ with values in $\mathcal{W} \otimes \mathcal{C}$ where
\[
r_k: \Gamma(M, (\mathcal{W \otimes C})_p) \rightarrow \mathcal{A}^{1}(M, (\mathcal{W \otimes C})_{p+k}) 
\]
such that the connection
\[
D= \nabla \otimes 1 + 1 \otimes \nabla^{\mathcal{C}} + [r,\cdot]
\]
has curvature $D^2=\Omega_{\hbar}$.

\end{theorem}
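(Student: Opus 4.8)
The approach is Fedosov's iterative construction, now carried out on the tensor bundle $\mathcal{W}\otimes\mathcal{C}$ with the Levi--Civita/Clifford curvature treated on exactly the same footing as the symplectic curvature. Write $\bar\nabla=\nabla\otimes 1+1\otimes\nabla^{\mathcal{C}}$ and split $r=r_{-1}+r'$ with $r'=r_1+r_2+\cdots$ (recall $r_0=0$ and $[r_{-1},\cdot]=\delta$). Since $r$ is an odd form one has $[r,[r,\cdot]]=[r^2,\cdot]$, and using $[\delta,\bar\nabla]=0$ (the degree $p-1$ step above) the curvature $D^2$ is the inner derivation by a $\mathcal{W}\otimes\mathcal{C}$-valued $2$-form which, grouped by origin, has the shape
\[
\Omega \;=\; \omega \;+\; R \;-\; \delta r' \;+\; \bar\nabla r' \;+\; (r')^2 ,
\]
where $\omega$ comes from $r_{-1}$, and $R=R_{\nabla}+R_{\nabla^{\mathcal{C}}}$ is the sum of the symplectic curvature (written through (\ref{eq: derW})) and the image under the isomorphism (\ref{eq:tauinverseh}) of the Riemann curvature of $X$. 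Following Fedosov one writes $D^2$ for this representative $2$-form; the content of the theorem is that it can be made equal to the prescribed scalar form $\Omega_{\hbar}$, in which case, $\Omega_{\hbar}$ being central, the derivation $D^2$ vanishes and $D$ is abelian. The problem thus reduces to solving the Fedosov equation
\[
\delta r' \;=\; R \;+\; \bar\nabla r' \;+\; (r')^2 \;+\; (\Omega_{\hbar}-\omega)
\]
for a $1$-form $r'$ of positive filtration degree.

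I would solve this degree by degree with the homotopy $\delta^{-1}$. Because $\delta^{-1}$ strictly raises the $\mathcal{W}\otimes\mathcal{C}$-filtration degree while $\bar\nabla$ preserves it and both $(r')^2$ and $R$ contribute only in degrees $\geq 0$, whereas $\Omega_{\hbar}-\omega=\hbar\omega_1+\hbar^2\omega_2+\cdots$ contributes only in degrees $\geq 2$ (as $|\hbar|=2$), the degree-$k$ component of $r'$ is determined by $r_1,\dots,r_{k-1}$ together with this fixed seed data; one sets
\[
r_k \;=\; \delta^{-1}\Big( R + \bar\nabla r' + (r')^2 + (\Omega_{\hbar}-\omega) \Big)_{k-1},
\]
the subscript denoting the homogeneous degree-$(k-1)$ part. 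This is the recursion already performed above for $r_1$ and $r_2$; the only new ingredient for general $\Omega_{\hbar}$ is the extra seed $\Omega_{\hbar}-\omega$, which enters at the even filtration degrees.

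It remains to check that this recursion is consistent, i.e.\ that at each stage the quantity to which $\delta^{-1}$ is applied lies in the subspace on which $\delta^{-1}$ inverts $\delta$; equivalently, writing $\rho$ for the defect $2$-form between the two sides of the Fedosov equation, that $\rho=0$. Differentiating the defining relation and substituting the recursion --- the computation in the paragraph preceding the theorem --- yields $\delta\rho=0$, once one invokes the Bianchi identities for both $\nabla$ and $\nabla^{\mathcal{C}}$, the graded Jacobi identity across the mixed Weyl--Clifford supercommutator, and --- for the prescribed correction term --- the closedness $d\omega_i=0$, which gives $\bar\nabla(\Omega_{\hbar}-\omega)=0$. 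This $\delta$-closedness, together with the acyclicity of $(\mathcal{A}^{\bullet}(M,(\mathcal{W}\otimes\mathcal{C})_{\bullet}),\delta)$ and the fact that the constructed $\rho$ has strictly positive filtration degree, forces $\rho=0$ by the standard Fedosov argument. Hence $D^2=\Omega_{\hbar}$; the normalization $\delta^{-1}r'=0$ makes $r$ unique.

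The main obstacle is precisely this last consistency check --- the $\delta$-closedness of the defect. This is the one place the geometry enters non-formally: it requires juggling the two Bianchi identities (symplectic and Riemannian) simultaneously with the graded Jacobi identity, tracking signs carefully through the Weyl--Clifford supercommutator, and keeping account of where the central contributions sit, so that the recursion produces $\Omega_{\hbar}$ and nothing more. Everything else --- acyclicity of $(\mathcal{A}^{\bullet}(M,(\mathcal{W}\otimes\mathcal{C})_{\bullet}),\delta)$, convergence of the iteration in the filtration, and the degree bookkeeping --- is formal and runs as in the purely symplectic Fedosov construction. The genuinely new input, supplied by the lemma and by the discussion of $\nabla^{\mathcal{C}}$ in this section, is only that $\nabla^{\mathcal{C}}$ respects the Clifford product and has curvature realized as the inner derivation by $\tau^{-1}$ of the Riemann tensor, so that it slots into the recursion on the same footing as the symplectic connection $\nabla$.
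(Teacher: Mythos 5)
Your proposal follows essentially the same route as the paper: Fedosov's recursion $r_k=\delta^{-1}(\,\cdot\,)_{k-1}$ carried out on the Weyl--Clifford bundle, with consistency of the recursion established by showing the defect $2$-form is $\delta$-closed via the graded Jacobi and Bianchi identities and then invoking the acyclicity of $(\mathcal{A}^{\bullet}(M,(\mathcal{W}\otimes\mathcal{C})_{\bullet}),\delta)$. The only difference is that you carry the seed $\Omega_{\hbar}-\omega$ (using $d\omega_i=0$) explicitly through the recursion, whereas the paper's displayed computation treats the case $D^2$ central with the general $\Omega_{\hbar}$ left implicit --- a harmless refinement.
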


\section{Flat sections and deformation}

There is a one-to-one correspondence between formal power series in $\hbar$ with coefficients in $\Lambda^{\bullet}$ and flat sections of the connection $D$ from Theorem \ref{firsttheorem}. Here the subalgebra of flat sections, sometimes refered to as the quantum algebra,  plays the role of the pseudodifferential operator algebra, while the exterior algebra plays the role of the symbol space.

The projection of  $\mathcal{W}$ onto its center $\tilde{\sigma}:\Gamma(M,\mathcal{W})\rightarrow C^{\infty}(M) [[\hbar]]$ can be extended to $\Gamma(M,\mathcal{W}\otimes \mathcal{C})$ simply by putting $\tilde{\sigma}=\tilde{\sigma}\otimes \mathrm{Id}$. Notice that this extension now takes values in $\Gamma(M,\mathcal{C})$. The composition of this map with the Clifford symbol map $\sigma_{\hbar}$ gives a symbol map
\[
\Sigma=\sigma_\hbar \tilde{\sigma}:\mathcal{W}\otimes \mathcal{C} \rightarrow \Lambda [[\hbar]] 
\]
The quantization map 
\[
\Theta:\Lambda [[\hbar]]  \rightarrow \ker(D)
\]
is constructed by extending any given section of $\Lambda[[\hbar]]$ to a flat section of $D$. Let $a = a_0 + \hbar a_1 + \hbar^2a_2 + \cdots$ 
be a section of the bundle $\Lambda [[\hbar]] $. Quantize $a$ using $\theta_\hbar$ and then extend $\theta_\hbar (a) \in \Gamma(M, (\mathcal{W \otimes C})_\bullet)$ to a flat section
\[
\Theta(a)=A= A_{\bullet} + A_{\bullet+1} + A_{\bullet+2} + \cdots
\]
of the bundle $\mathcal{W \otimes C}$ where $A_{\bullet}= \theta_\hbar (a) $ and $A_{\bullet+k} \in \Gamma(M, (\mathcal{W \otimes C})_{\bullet+k})$. 
The terms $A_{\bullet+k}$ in the expansion of $A$ are constructed using a recursive procedure as in \cite{fedosov1994simple}. 

\begin{example}
We show how to construct the first terms in the extension of a $1$-form. Let $a=a(x,\xi)dx^s$ be a section of $\Lambda^{1} [[\hbar]] $. First, note that $A_{-1}=\theta_\hbar (a) = \frac{1}{\hbar}a(x,\xi)e_s$ is not in the center of $\mathcal{W \otimes C}$. The flatness equation $DA=0$ is written in each degree of the filtration of $\mathcal{W\otimes C}$. This yields equations for each term $A_k$ which again can be solved using the acyclicity of the complex $(\mathcal{A}^\bullet(M,\mathcal{W\otimes C}),\delta)$.

\begin{itemize}
\item \textit{Degree} -1: 
$(\nabla \otimes 1 )A_{-1} + (1\otimes \nabla^{\mathcal C})A_{-1} - \delta A_{0} = 0$. As before, if $(\nabla \otimes 1 )A_{-1} + (1\otimes \nabla^{\mathcal C})A_{-1}$ is in the kernel of $\delta$, then $A_0$ exists and can be solved for using the homotopy $\delta^{-1}$. But this is clear since
since $(\nabla \otimes 1 )A_{-1}$ and $(1\otimes \nabla^{\mathcal C})A_{-1}$ do not contain Weyl variables. 
The computation for $A_{0}$ in local coordinates shows that $A_{0} \in \Gamma(M,\mathcal{W}_0 \otimes \mathcal{C}_0) = \Gamma(M, (\mathcal{W \otimes C})_0)$.
\item \textit{Degree} 0: $ (\nabla \otimes 1 )A_{0} + (1\otimes \nabla^{\mathcal C})A_{0} + [r_1,A_{-1}] - \delta A_1 = 0$ can be solved for $A_1$ if 
\begin{equation} \label{eq: degree2}
\delta ((\nabla \otimes 1 )A_{0} + (1\otimes \nabla^{\mathcal C})A_{0} + [r_1,A_{-1}])=0.
\end{equation}
We use Jacobi's identity to check (\ref{eq: degree2}).  First note that
\begin{align*}
\delta ((\nabla \otimes 1)A_{0}) & =  [[\delta,\nabla],A_{0}] + (-1)^{|\delta||\nabla|}[\nabla,[\delta, A_{0}]] \\
& =  0-[\nabla,[\delta,A_{0}]]\\
& =  -[\nabla, \nabla A_{-1} + \nabla^{\mathcal{C}} A_{-1}]\\
& =  -[\nabla,[\nabla^{\mathcal{C}}, A_{-1}]].
\end{align*}
Now we look at the second term in (\ref{eq: degree2}):
\begin{align*}
\delta((1\otimes \nabla^{\mathcal{C}})A_{0}) & =  [[\delta, \nabla^{\mathcal{C}}], A_{0}]] + (-1)^{|\delta||\nabla^{\mathcal{C}}|} [\nabla^{\mathcal{C}},[\delta, A_{0}]]\\
& =  0 - [\nabla^{\mathcal{C}},[\delta,A_{0}]] \\
& =  -[\nabla^{\mathcal{C}},\nabla A_{-1} + \nabla^{\mathcal{C}} A_{-1}] \\
& =  -[\nabla^{\mathcal{C}},[\nabla, A_{-1}]]- [\nabla^{\mathcal{C}},[\nabla^{\mathcal{C}},A_{-1}]] \\
& =  [\nabla,[\nabla^{\mathcal{C}}, A_{-1}]] - [(\nabla^{\mathcal{C}})^2, A_{-1}].
\end{align*} 
Finally the third term in (\ref{eq: degree2}) yields
\begin{align*}
\delta([r_1,A_{-1}]) & = [[\delta,r_1], A_{-1} ] + (-1)^{|\delta||r_1|}[r_1,[\delta,A_{-1}]] \\
& =  [\nabla^2 + {\nabla^{\mathcal{C}}}^2,A_{-1}] + 0\\
& =  0 + [(\nabla^{\mathcal{C}})^2,A_{-1}].
\end{align*} 

Equation (\ref{eq: degree2}) is verified after adding these terms. 

\end{itemize}

\end{example}

\begin{theorem}
\label{secondtheorem} 
Given a section
\begin{equation}
\label{flatsectheo}
a = a_0 + \hbar a_1 + \hbar^2a_2 + \cdots
\end{equation}
of the bundle $\Lambda[[\hbar]]$ there exists a unique section $\Theta(a)=A\in \ker(D)$ such that $\Sigma(A)=a$. 
\end{theorem}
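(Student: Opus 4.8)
The plan is to mimic Fedosov's recursive construction, now carried along the $\mathbb{Z}$-filtration of $\mathcal{W}\otimes\mathcal{C}$ rather than that of $\mathcal{W}$ alone, and to extract uniqueness and existence simultaneously from the acyclicity of $(\mathcal{A}^{\bullet}(M,(\mathcal{W}\otimes\mathcal{C})_{\bullet}),\delta)$ together with the homotopy identity $\delta\delta^{-1}+\delta^{-1}\delta=(p+r)\mathrm{Id}$. First I would quantize: set $A_{\bullet}=\theta_{\hbar}(a)$, which lies in $\Gamma(M,(\mathcal{W}\otimes\mathcal{C})_{\bullet})$ and by construction satisfies $\Sigma(A_{\bullet})=a$ since $\sigma_{\hbar}\circ\theta_{\hbar}=\mathrm{Id}$ on $\Lambda$ and $\theta_{\hbar}(a)$ already has zero Weyl-degree part equal to $\theta_\hbar(a)$ itself. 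I then write the putative flat section as $A=\sum_{k\geq 0}A_{\bullet+k}$ and expand the flatness equation $DA=0$, with $D=\nabla\otimes 1+1\otimes\nabla^{\mathcal{C}}+[r,\cdot]$ and $r=r_{-1}+r_0+r_1+\cdots$ as in Theorem \ref{firsttheorem}, into homogeneous components of the filtration. Since $r_{-1}$ induces $-\delta$ (up to sign conventions) via $[r_{-1},\cdot]$, the component of $DA=0$ in filtration degree $\bullet+k-1$ reads
\[
\delta A_{\bullet+k} = (\nabla\otimes 1+1\otimes\nabla^{\mathcal{C}})A_{\bullet+k-1} + \sum_{j\geq 1}[r_j, A_{\bullet+k-j}],
\]
which expresses $\delta A_{\bullet+k}$ purely in terms of lower-order data.

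The key step is the inductive solvability and uniqueness of this hierarchy. Uniqueness: suppose $A,A'$ are both flat with $A_{\bullet}=A'_{\bullet}=\theta_\hbar(a)$; then the difference $B=A-A'$ satisfies $DB=0$ with $B_{\bullet}=0$, and reading $DB=0$ in ascending degrees, at each stage $\delta B_{\bullet+k}$ is determined by components $B_{\bullet+j}$ with $j<k$, all inductively zero, so $\delta B_{\bullet+k}=0$; but $B_{\bullet+k}$ is a $0$-form, hence $r=0$ in the homotopy identity, so $B_{\bullet+k}=\delta^{-1}\delta B_{\bullet+k}/(\text{positive integer})=0$ — wait, more precisely $\delta^{-1}B_{\bullet+k}=0$ automatically for $0$-forms and $(\delta\delta^{-1}+\delta^{-1}\delta)B_{\bullet+k}=p\cdot B_{\bullet+k}$ with $p=\bullet+k>0$ forces $B_{\bullet+k}=\frac{1}{p}\delta^{-1}\delta B_{\bullet+k}=0$. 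Existence: I must check that the right-hand side $R_{\bullet+k}$ displayed above is $\delta$-closed before applying $\delta^{-1}$ to define $A_{\bullet+k}=\frac{1}{p}\delta^{-1}R_{\bullet+k}$ (the degree-$(p{-}1)$-form $A_{\bullet+k}$ being then recovered since $\delta^{-1}A_{\bullet+k}=0$ as a $0$-form). Closedness of $R_{\bullet+k}$ follows the same Jacobi-identity-plus-recursion computation displayed in the Example for $A_1$ and in the proof of Theorem \ref{firsttheorem} for $\rho$: one uses $[\delta,\nabla]=[\delta,\nabla^{\mathcal{C}}]=0$, the identity $[\delta,r]=\nabla^2+{\nabla^{\mathcal{C}}}^2+[\nabla+\nabla^{\mathcal{C}},r]+r^2-\Omega_\hbar$ (i.e. $D^2=\Omega_\hbar$ central), and the inductively assumed relations $\delta A_{\bullet+j}=R_{\bullet+j}$ for $j<k$, to telescope $\delta R_{\bullet+k}$ to zero; the central term $\Omega_\hbar$ drops out because it acts trivially in the bracket.

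Finally I would verify the symbol constraint $\Sigma(A)=a$. By construction $\Sigma(A)=\sigma_{\hbar}\tilde\sigma(A)$; since $\tilde\sigma$ projects onto the Weyl-center and each $A_{\bullet+k}$ with $k\geq 1$ is built by applying $\delta^{-1}$ — which multiplies by $\hat y^i$ — every such term has strictly positive Weyl-polynomial degree and hence lies in the kernel of $\tilde\sigma$, so $\tilde\sigma(A)=\tilde\sigma(\theta_\hbar(a))$; then $\sigma_\hbar$ of that returns $a$. I would also remark that the grading bookkeeping is consistent: because $r_k$ raises filtration degree by $k$ and $\delta^{-1}$ raises it by one while lowering form-degree by one, the recursion stays within $\Gamma(M,(\mathcal{W}\otimes\mathcal{C})_{\bullet+k})$ as claimed. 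The main obstacle I anticipate is the bookkeeping in the closedness step: keeping the signs straight in the supercommutator Jacobi identities while simultaneously substituting the defining relation for $[\delta,r]$ and the inductive hypotheses, so that the apparently many terms cancel in pairs exactly as in the $\rho$ computation of the previous section; everything else is a routine transcription of Fedosov's argument into the $\mathcal{W}\otimes\mathcal{C}$ setting.
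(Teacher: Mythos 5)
Your existence argument coincides with the paper's proof: expand $DA=0$ degree by degree in the filtration, show the inhomogeneous term $\phi_n=(\nabla\otimes1)A_n+(1\otimes\nabla^{\mathcal C})A_n+\sum_{k}[r_k,A_{n-k}]$ is $\delta$-closed using the super-Jacobi identity, $[\delta,\nabla]=[\delta,\nabla^{\mathcal C}]=0$, the relations $[\delta,r_k]=\Omega_k$ extracted from $D^2=\Omega_\hbar$ (with the central term dropping out of all brackets), and the already-established flatness equations in lower degrees, then invert with $\delta^{-1}$. That induction is exactly the content of the paper's proof of Theorem \ref{secondtheorem}, and your symbol check ($\delta^{-1}$ multiplies by $\hat y^i$, so all correction terms are killed by $\tilde\sigma$) is also fine.

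The genuine gap is in your uniqueness paragraph. In the homotopy identity $\delta\delta^{-1}+\delta^{-1}\delta=(p+r)\mathrm{Id}$ the integer $p$ is the $\hat y$-degree of the monomial $\hat y^\alpha e_\beta dx^\gamma$ and $r$ is the form degree; it is \emph{not} the total filtration degree $\bullet+k$, which also counts Clifford generators and powers of $\hbar$. Consequently a $0$-form component of $B_{\bullet+k}$ containing no Weyl variables (a section of $\mathcal{C}[[\hbar]]$ times a function) is automatically $\delta$-closed and the identity reads $0=0$ on it, so your step ``$\delta B_{\bullet+k}=0$ and $p=\bullet+k>0$ force $B_{\bullet+k}=0$'' fails precisely on such components. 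Indeed the conclusion you are driving at -- that a flat section is determined by its lowest filtration term $A_\bullet$ alone -- is false: for any $b\in\Gamma(M,\Lambda)[[\hbar]]$ and $N$ large, $A$ and $A+\Theta(\hbar^N b)$ are both flat and have the same components in all low filtration degrees. Uniqueness requires the full hypothesis $\Sigma(A)=\Sigma(A')=a$: since $\sigma_\hbar$ is an isomorphism this gives $\tilde\sigma(B)=0$, i.e.\ the Weyl-central part of $B$ vanishes in \emph{every} filtration degree, and only then does your homotopy argument (applied to the remaining positive-$\hat y$-degree part) kill $B$ inductively. The same point is what makes your normalization $A_{\bullet+k}=\delta^{-1}(\cdot)$ the correct choice in the existence step: one must set the Weyl-central part of each correction to zero, which is exactly the condition $\Sigma(A)=a$. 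With that repair the outline is sound; as written, the uniqueness step does not go through.
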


\begin{proof}
It is enough to prove the statement for a local section $a=a_\alpha (x,\xi)dx^\alpha \in \Gamma(M,\Lambda^{r})$ of the type (\ref{flatsectheo}) with $a_1=a_2=\cdots=0$, where $(x,\xi)$ are local cotangent coordinates, $\alpha=(i_1,\ldots,i_r)$ is a multi-index.  We argue by induction. Suppose we can extend $A$ up the $n$-th term $A_s + A_{s+1} + \cdots + A_n$,  where $A_s=\theta_\hbar(a)=\frac{1}{\hbar^r}a_\alpha e_\alpha$  and $s=r - 2^r$. Note that the flatness condition $DA=0$ in degree $j$ is 
\begin{equation}\label{eq:generalflat}
(\nabla \otimes 1 )A_j + (1\otimes \nabla^{\mathcal C})A_j + \sum_{k=1}^{j-1}[r_k,A_{j-k}] - \delta A_{j+1} = 0.
\end{equation}
Hence, to prove that the term $A_{n+1}$ exists, we verify that 
\[
\phi_n =(\nabla \otimes 1 )A_n + (1\otimes \nabla^{\mathcal C})A_n + \sum_{k=1}^{n-1}[r_k,A_{n-k}]
\]
is in the kernel of $\delta$. To simplify the computation we introduce the following notation
\[
(D+ \delta)_k =
\begin{cases}
\nabla +\nabla^{\mathcal C}, & k=0, \\
r_1, & k=1, \\
r_2, & k=2, \\
\vdots \\
\end{cases}
\]
Denote the homogeneous part of degree $k$ of $D^2$ by $\Omega_k$. From (\ref{eq:D2}) we see that $[\delta,r_k]= \Omega _k$. Use equation (\ref {eq:generalflat}) for $0 \leq j < n+1$ and the Jacobi identity to obtain

\begin{align*}
[\delta,\phi_n] & =  -[\nabla +\nabla^{\mathcal{C}},[\delta, A_n]]+\sum_{k=1}^{n-1}[\delta,[r_k,A_{n-k}]]\\ 
& =   -[\nabla +\nabla^{\mathcal{C}}, [\delta, A_n]]+\sum_{k=1}^{n-1}[[\delta,r_k],A_{n-k}]  - [r_k,[\delta,A_{n-k}]] \\
& =  -\sum_{k=0}^{n-1}[(D+\delta)_k, (D+\delta)A_{n-k-1}]+\sum_{k=1}^{n-1}[[\delta,r_k],A_{n-k}]\\
& =  -\sum_{k=0}^{n-1}[\Omega_k, A_{n-k-1}]+\sum_{k=1}^{n-1}[[\delta,r_k],A_{n-k}] \\
& =  0.
\end{align*}
\qed
\end{proof}

Now we use the previous results to define star product of sections of $\Lambda [[\hbar]]$. By this we mean a $\mathbb{R} \left(( \hbar \right))$-bilinear associative multiplication law  
\begin{equation}
\label{deformation}
\star:\Lambda[[\hbar]]  \otimes_{\mathbb{R}\left(( \hbar \right))} \Lambda [[\hbar]]  \rightarrow \Lambda [[\hbar]]
\end{equation}
which we write as $ p \star q = p \wedge q + \hbar \beta_1(p,q)+\hbar^2 \beta_2(p,q) +\cdots$,  for $p,q\in \Gamma(M,\Lambda[[\hbar]])$, where each $\beta_i$ is bilinear.  Using the Moyal-Clifford product $\circ$ on $\mathcal{W \otimes C}$ we define
\begin{equation}
p\star q := \Sigma(\Theta(p)\circ \Theta(q)).
\end{equation}
\begin{example}
The flat section $P=P_1 + P_2 + P_3 + \cdots $, associated to $p=p(x,\xi)dx^r \in \Gamma(M,\Lambda^1)$ where $r\in \lbrace 1,\ldots,n \rbrace$ is fixed, is given by
\begin{eqnarray*}
P & = & \frac{1}{\hbar}pe_r + \frac{1}{\hbar}(\partial_ip \hat{y}^i e_r + p\Gamma_{ir}^{k} \hat{y}^{2i-1} e_k) + \frac{1}{2\hbar}(\partial_l \partial_ip \hat{y}^l \hat{y}^i e_r + \partial_l (p) \Gamma_{ir}^k \hat{y}^l\hat{y}^{2i-1} e_k + (\partial_mp )\tilde{\Gamma}_{li}^m \hat{y}^l \hat{y}^i e_r \\
& + &(\partial_ip )\Gamma_{lr}^k \hat{y}^{2l-1}\hat{y}^i e_k + p\tilde{\Gamma}_{lj}^m \Gamma_{ir}^k \hat{y}^l \hat{y}^{2i-1} e_k +\frac{1}{20\hbar}pR_{rli}^k \hat{y}^{2l-1} \hat{y}^{2i-1} e_k) + \cdots.
\end{eqnarray*} 
Suppose $q=q(x,\xi)dx^s \in \Gamma(M,\Lambda^1)$ and $Q$ is its associated flat section. Fix a point $\mathbf{x}\in X$ and let $(x^1,\ldots,x^n)$ be a normal coordinate system at $\bold{x}$. Using the fact that the Christoffel symbols of the Levi-Civita connection are zero at $\bold{x}$ we have
\begin{align*}
\left. p \star q \right|_{\bold{x}}& =  \Sigma( P \circ Q) \\
& =  p \, q\, dx^r dx^s + \frac{1}{40}\left( \frac{\partial^2 p}{\partial \xi^l \partial \xi^i} q - p \frac{\partial^2 q}{\partial \xi^l \partial \xi ^i}  \right) R_{rli}^s + \text{higher order terms}
\end{align*}
Note that these first two terms correspond to the leading symbol of $p\star q$ obtained by setting $\hbar=0$.
\end{example}

\section{References}
\bibliographystyle{model1-num-names}
\bibliography{references}
\end{document}